\documentclass[11pt]{article}

\usepackage{amsmath,amssymb,amsthm,mathtools}
\usepackage[a4paper,margin=1in]{geometry}

\newtheorem{theorem}{Theorem}
\newtheorem{lemma}[theorem]{Lemma}
\newtheorem{conjecture}[theorem]{Conjecture}

\numberwithin{equation}{section}

\newcommand{\OSOME}{\operatorname{OSOME}}
\newcommand{\OSOMEo}{\operatorname{OSOME}_{o}}
\newcommand{\OSOMEe}{\operatorname{OSOME}_{e}}
\newcommand{\SOME}{\operatorname{SOME}}
\newcommand{\DSOME}{\operatorname{DSOME}}
\newcommand{\SONO}{\operatorname{SONO}}
\newcommand{\SENO}{\operatorname{SENO}}
\newcommand{\SNO}{\operatorname{SNO}}

\title{A proof of the Baruah--Gogoi conjecture on sums of
odd and even overlined parts}
	 
{\small 
	\author{ \normalsize  Eric H. Liu$^1$, X. L. Liu$^2$  and   Olivia X. M. Yao$^{3}$ \\[6pt]
		\small	$^{1}$School of Statistics and
		Information,\\
		\small	Shanghai University of International
		\small	Business and Economics,\\
		\small	Shanghai, 201620, P. R. China\\	
		\small	$^{2,3}$School of Mathematical Sciences\\
		\small	Suzhou University of Science and Technology\\
		\small	Suzhou 215009, Jiangsu, P. R. China\\
		\small	Emails: \texttt{  
			$^{1}$liuhai@suibe.edu.cn, $^{2}$luluux@163.com, 
			$^{3}$yaoxiangmei@163.com }
	}
}
\date{}

\begin{document}

\maketitle

\begin{abstract}
	Andrews and   Dastidar introduced the statistic $\SOME(n)$,
	defined as the difference between the sum of all odd parts and the sum
	of all even parts occurring in the partitions of $n$, and investigated
	its arithmetic properties. Motivated by their work, Baruah and Gogoi
	defined two analogous statistics for overpartitions, namely
	$\OSOMEo(n)$ and $\OSOMEe(n)$, which record the sums of all odd and all
	even overlined parts, respectively. They established several
	congruences for these statistics and their difference, and proposed
	 a conjecture on  congruences modulo $7$. In this paper, we prove their conjecture
	by elementary $q$-series methods. Our proof combines  
	two specializations of Watson's quintuple product identity, a
	$(p,k)$-parameterization of divisor-sum series, and termwise
	logarithmic differentiation.  
\end{abstract}

\smallskip
\noindent\textbf{Keywords.}  congruences; partitions; overpartitions; 
sums of overlined parts; quintuple product identity.
 
\par\smallskip
\noindent\textbf{2020 Mathematics Subject Classification.}
Primary 11P83; Secondary 05A17.

\section{Introduction}

An integer partition of a positive integer \(n\) is a nonincreasing
finite sequence of positive integers whose sum is \(n\); see
\cite{Andrews1976}.  For example, the seven partitions of \(5\) are
\[
\begin{gathered}
(5),\ (4,1),\ (3,2),\ (3,1,1),\ 
(2,2,1),\ (2,1,1,1),\ (1,1,1,1,1).
\end{gathered}
\]
Let \(p(n)\) denote the number of partitions of \(n\), with
\(p(0):=1\).  Throughout the paper, \(|q|<1\), and we write
\[
(a;q)_\infty:=\prod_{j\geq0}(1-aq^j),
\qquad
J_m:=(q^m;q^m)_\infty.
\]
Here $m$ is a positive integer. 
Euler's generating function for $p(n)$ is
\[
\sum_{n\geq0}p(n)q^n=\frac1{J_1}.
\]

Among the best-known arithmetic properties of \(p(n)\) are
Ramanujan's congruences \cite{Ramanujan}.  The congruence relevant
here is
\begin{align}\label{1-1}
 p(5n+4)\equiv0\pmod5.
\end{align}
Recently, Andrews and Dastidar \cite{AndrewsGhoshDastidar}
obtained refinements of Ramanujan's congruence modulo \(5\).
They defined the statistic \(\SOME(n)\) as the sum of all odd
parts occurring in the partitions of \(n\) minus the corresponding
sum of all even parts. 
   For instance,
the seven partitions displayed above contain odd parts with total
weight \(23\) and even parts with total weight \(12\); hence
\(\SOME(5)=11\).  They proved
\[
\sum_{n\geq0}\SOME(n)q^n
=\frac1{J_1}\sum_{n\geq1}\frac{q^n}{(1+q^n)^2}.
\]
They
established, among other results,
\begin{align*}
 \SOME(4n)&\equiv0\pmod4,\\
 \SOME(5n+2)&\equiv0\pmod5,
\end{align*}
and
\begin{align}\label{1-2}
 \SOME(5n+4)\equiv0\pmod5.
\end{align}
Combining \eqref{1-1} and \eqref{1-2}, one sees that both the odd-part
total and the even-part total over all partitions of \(5n+4\) are
divisible by \(5\).  
    Andrews and Dastidar \cite{AndrewsGhoshDastidar} conjectured  that 
\[
\operatorname{SOME}(\lambda)\equiv 0 \pmod{5^\alpha},
\]
where $\alpha\geq 1$ and $\lambda\geq 0$ are integers satisfying
$
24\lambda\equiv 1\pmod{5^\alpha}.
$
 This conjecture was proved by  Bardhan and  Saikia \cite{BardhanSaikia}.
  Yao and Zhou \cite{Yao} proved that, for every \(n\geq0\), 
\[
\SOME(25n+2)\equiv0\pmod {25}.
\]
Andrews and   Dastidar \cite{AndrewsGhoshDastidar} also introduced the
corresponding statistic \(\DSOME(n)\) for partitions into distinct
parts, whose arithmetic properties were further investigated by
Baruah and Gogoi \cite{BaruahGogoiDSOME}.

An \emph{overpartition} is a partition in which the first occurrence
of each distinct part may be overlined.  Corteel and Lovejoy
\cite{CorteelLovejoy} developed the systematic theory of
overpartitions.  If \(\overline p(n)\) is the number of
overpartitions of \(n\), then
\begin{align}
\sum_{n\geq0}\overline p(n)q^n
&:=\prod_{m\geq1}\frac{1+q^m}{1-q^m}
=\frac{J_2}{J_1^2}. \label{overpartition}
\end{align}

Gireesh and Hemanthkumar \cite{GireeshHemanthkumar} introduced an
overpartition analogue of \(\SOME(n)\) and proved congruences modulo
\(3\), \(5\), and certain powers of \(2\).  Garvan and Sarma
\cite{GarvanSarma} considered the total odd, even, and unrestricted
non-overlined parts in all overpartitions of \(n\), denoted by
\(\SONO(n)\), \(\SENO(n)\), and \(\SNO(n)\), respectively.  Their
theta function and dissection methods yield, among other congruences,
\begin{align*}
\SONO(5n+3)&\equiv0\pmod5,\\
\SENO(7n+5)&\equiv0\pmod7,\\
\SNO(5n+2)&\equiv\ \SNO(5n+4)\equiv0\pmod5,\\
\SNO(7n+3)&\equiv0\pmod7.
\end{align*}

Motivated by these developments, Baruah and Gogoi
\cite{BaruahGogoi} defined two statistics. The first,
\(\OSOMEo(n)\), is the sum of all odd overlined parts in all
overpartitions of \(n\); the second, \(\OSOMEe(n)\), is the
corresponding sum of all even overlined parts.
 They then put
\[
 \OSOME(n):=\OSOMEo(n)-\OSOMEe(n).
\]
For example, the eight overpartitions of $3$ are
\[
\begin{gathered}
	(3),\quad (\overline{3}),\quad
	(2,1),\quad (\overline{2},1),\quad 
	(2,\overline{1}),\quad
	(\overline{2},\overline{1}),\quad
	(1,1,1),\quad (\overline{1},1,1).
\end{gathered}
\]
The odd overlined parts have total weight
$
3+1+1+1=6,
$
whereas the even overlined parts have total weight
$
2+2=4.
$
Consequently,
\[
\OSOMEo(3)=6,\quad
\OSOMEe(3)=4,\quad
\OSOME(3)=\OSOMEo(3)-\OSOMEe(3)=2.
\] 
They obtained closed generating functions and congruences modulo
\(3\), \(5\), \(8\), and \(11\).  Their computations led to the
following five congruences modulo \(7\).

\begin{conjecture}  [{Baruah and Gogoi \cite[Conjecture~7.3]{BaruahGogoi}}]
	\label{conj-1}
For every $n\geq0$,
\begin{align}
 \OSOME(56n+43)&\equiv0\pmod7, \label{c-1}\\
 \OSOMEo(56n+11)&\equiv0\pmod7,\label{c-2}\\
 \OSOMEo(56n+23)&\equiv0\pmod7,\label{c-3}\\
 \OSOMEo(56n+47)&\equiv0\pmod7,\label{c-4}\\
 \OSOMEe(56n+51)&\equiv0\pmod7.\label{c-5}
\end{align}
\end{conjecture}

The aim of this paper is to prove Conjecture~\ref{conj-1} by
 combining two specializations of the quintuple product
identity, a \((p,k)\)-parameterization of divisor-sum series, and
formal differentiation.

\section{Preliminaries}

Let
\[
 \phi(q):=\sum_{r\in\mathbb Z}q^{r^2},
 \qquad
 \psi(q):=\sum_{r\geq0}q^{r(r+1)/2}.
\]
The Jacobi triple product gives the standard product evaluations
\cite[Chapter~1]{Berndt-1}
\begin{equation}\label{eq:theta-products}
 \phi(-q)=\frac{J_1^2}{J_2},
 \qquad
 \phi(q)=\frac{J_2^5}{J_1^2J_4^2},
 \qquad
 \psi(q)=\frac{J_2^2}{J_1}.
\end{equation}

 Baruah and Gogoi \cite{BaruahGogoi} established the generating functions for 
  $\OSOME(n)$, $\OSOMEo(n)$ and  $\OSOMEe(n)$: 
 \begin{align}
 \sum_{n\geq0}\OSOME(n)q^n
 &=\frac18\left(\frac1{\phi(-q)}-\phi(-q)^3\right),
 \label{eq:OSOME-gf}\\
 \sum_{n\geq0}\OSOMEo(n)q^n
 &=\frac{\phi(q)^4-2\phi(-q)^4+1}{24\phi(-q)},
 \label{eq:OSOMEo-gf}\\
 \sum_{n\geq0}\OSOMEe(n)q^n
 &=\frac{\phi(q)^4+\phi(-q)^4-2}{24\phi(-q)}.
 \label{eq:OSOMEe-gf}
\end{align}

Comparing coefficients in \eqref{eq:OSOME-gf},
\eqref{eq:OSOMEo-gf}, and \eqref{eq:OSOMEe-gf} gives
\begin{align}
 8\OSOME(n)&=\overline p(n)-c(n),\label{eq:OSOME}\\
 24\OSOMEo(n)&=x(n)-2c(n)+\overline p(n),\label{eq:OSOMEo}\\
 24\OSOMEe(n)&=x(n)+c(n)-2\overline p(n),\label{eq:OSOMEe}
\end{align}
where $\overline{p}(n)$ is defined by \eqref{overpartition} and 
 $c(n)$ and $x(n)$  are defined by
\begin{align}
\sum_{n\geq0}c(n)q^n&:=\phi(-q)^3, \label{c}
\\
\sum_{n\geq0}x(n)q^n&:=\frac{\phi(q)^4}{\phi(-q)}. \label{x}
\end{align}

\section{Some lemmas}

 Unless otherwise stated, all congruences    in this paper are taken modulo $7$.
 
Put
\begin{align*}
 P_3(q)&:=\sum_{n\geq0}\overline p(8n+3)q^n,&
 C_3(q)&:=\sum_{n\geq0}c(8n+3)q^n,\\
 X_3(q)&:=\sum_{n\geq0}x(8n+3)q^n,&
 P_7(q)&:=\sum_{n\geq0}\overline p(8n+7)q^n,\\
 X_7(q)&:=\sum_{n\geq0}x(8n+7)q^n.
\end{align*}

\begin{lemma}\label{lem:dissections}
One has
\begin{align}
 P_3(q)&\equiv
 \frac{J_2^{34}}{J_1^{27}J_4^8}
 +2q\frac{J_2^{10}J_4^8}{J_1^{19}},\label{eq:P3}\\
 C_3(q)&\equiv-\frac{J_2^6}{J_1^3},\label{eq:C3}\\
 X_3(q)&\equiv
 \frac{J_2^{54}}{J_1^{35}J_4^{16}}
 +4q^2\frac{J_2^6J_4^{16}}{J_1^{19}},\label{eq:X3}\\
 P_7(q)&\equiv\frac{J_2^{22}}{J_1^{23}},\label{eq:P7}\\
 X_7(q)&\equiv
 4\frac{J_2^{42}}{J_1^{31}J_4^8}
 +q\frac{J_2^{18}J_4^8}{J_1^{23}}.\label{eq:X7}
\end{align}
\end{lemma}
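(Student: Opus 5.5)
The plan is to obtain each series by iterated 2-dissection of the generating functions \eqref{overpartition}, \eqref{c}, \eqref{x}, working modulo $7$ only at the end to collapse exponents. Three standard 2-dissections will be used repeatedly:
\begin{align*}
\frac{1}{\phi(-q)}&=\frac{1}{\phi(-q^4)^4}\Bigl(\phi(q^4)^2+4q\,\psi(q^8)^2\Bigr)\cdot\phi(q^2)\cdots,\\
\phi(q)&=\phi(q^4)+2q\psi(q^8),\\
\phi(-q)&=\phi(q^4)-2q\psi(q^8).
\end{align*}
The first is more precisely written via $\frac1{\phi(-q)}=\frac{\phi(q)}{\phi(-q^2)^2}$. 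Also,
\[
\phi(q)^2=\phi(q^2)^2+4q\psi(q^4)^2 .
\]
For $\overline p$ I will use $\sum\overline p(n)q^n=1/\phi(-q)=\phi(q)/\phi(-q^2)^2$. The odd part is $2q\psi(q^8)/\phi(-q^2)^2$. Replacing $q^2\to q$ and dividing by $q$ gives
\[
\sum\overline p(2n+1)q^n=\frac{2\psi(q^4)}{\phi(-q)^2}=\frac{2\psi(q^4)\phi(q)^2}{\phi(-q^2)^4}.
\]
Splitting $\phi(q)^2$ again and then once more (mod $8$ index $3$, $7$ means odd-odd-even versus odd-odd-odd) produces the $8n+3$ and $8n+7$ series as explicit combinations of $\phi,\psi$ at various arguments. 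Converting these with \eqref{eq:theta-products} gives $J$-quotients. Similarly, $c$ comes from $\phi(-q)^3=(\phi(q^4)-2q\psi(q^8))^3$ expanded, and $x$ from $\phi(q)^4\cdot\phi(q)/\phi(-q^2)^2=\phi(q)^5/\phi(-q^2)^2$. For $x$, $\phi(q)^5$ is dissected by writing $\phi(q)^4=(\phi(q^2)^2+4q\psi(q^4)^2)^2$ together with the split of one $\phi(q)$, and the same is repeated at the next two levels.

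After three levels each exact result is a finite sum of products of $\phi(\pm q^{a})$, $\psi(q^{b})$, which I convert to $J_1,J_2,J_4,J_8$. The claimed forms involve only $J_1,J_2,J_4$. So the key reduction is modulo $7$, using $J_k^7\equiv J_{7k}$, the binomial-type identity $\phi(q)^7\equiv\phi(q^7)$, and, more usefully, the relations among $\phi(q),\phi(-q),\psi$. One such relation is $\phi(q)\phi(-q)=\phi(-q^2)^2$. Another is the quartic identity $\phi(q)^4-\phi(-q)^4=16q\psi(q^2)^4$, which lets me eliminate mixed terms and match, for instance, \eqref{eq:C3}. There, after expansion, the coefficient of the needed class is a polynomial in $\phi(q)$ and $q\psi(q^2)^2$ that should reduce to $-\psi(q)^3\cdot(\ldots)$, namely $-J_2^6/J_1^3=-\psi(q)^3$. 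Since $c(8n+3)$ is exactly the sum of three squares at $8n+3$, which all must be odd, I expect $C_3=-8\psi(q)^3$ exactly. Then $-8\equiv-1$, confirming the route.

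The main obstacle is \eqref{eq:P3}, \eqref{eq:X3} and \eqref{eq:X7}. There the exact dissections yield several terms with coefficients such as $16,64,\dots$, and the stated two-term shapes only appear after reducing coefficients mod $7$ and rewriting, e.g., $\phi(q)^{2k}$ in terms of the basis $\phi(q)^4$ and $q\psi(q^2)^4$ through the quartic identity. I expect the exact answers to contain extra terms that vanish only modulo $7$. My first attempt is to express everything in terms of the two modular-function variables $A=\phi(q)^4/\phi(-q)^4$ (or $p$-type parameters, as the abstract hints) times a common weight factor. Both sides of each congruence then become polynomials in one variable with integer coefficients, and the congruence reduces to a coefficientwise check mod $7$ of these polynomials. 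This is verifiable by finite computation and, as a safeguard, by comparing enough initial $q$-coefficients via Sturm's bound.
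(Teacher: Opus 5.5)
Your plan is sound and, carried out, proves the lemma. For $\overline p$ and $c$ it coincides with the paper's proof up to notation, but it handles $x$ differently.

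For $\overline p$, your $\sum\overline p(4n+3)q^n=8\psi(q^2)^3/\phi(-q)^4$ is the Hirschhorn--Sellers formula the paper cites. Splitting $\phi(q)^4=(\phi(q^2)^2+4q\psi(q^4)^2)^2$ inside $1/\phi(-q)^4=\phi(q)^4/\phi(-q^2)^8$ is the squared Xia--Yao dissection of $1/J_1^4$ written in theta functions. For $c$ you expand the same cube as the paper, and your three-squares remark is a nice direct reason for $C_3=-8\psi(q)^3$ and $c(8n+7)=0$.

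For $x$ the routes genuinely differ. The paper first peels off $\phi(-q)^3$ using $\phi(q)^4=\phi(-q)^4+16q\psi(q^2)^4$. It then reduces modulo $7$ already at the $4n+3$ stage via the squared quartic identity \eqref{r-3}, obtaining one four-term dissection \eqref{eq:x-four-dissection} from which $X_3$ and $X_7$ are read off together. You instead stay exact to the end, and this works cleanly. Take $b,t$ as in \eqref{eq:bt}, and use the identity $\psi(q)^2=\phi(q)\psi(q^2)$, which you will need to collapse the level-two terms (or else convert to eta quotients). Then your dissection of $\phi(q)^5/\phi(-q^2)^2$ gives exactly
\[
\sum_{n\ge0}x(4n+3)q^n=\psi(q^2)^3\Bigl(120+2^{11}\frac tb\Bigr),\qquad
X_3=\psi(q)^3\Bigl(120+2^{14}\frac{t(b+16t)}{b^2}\Bigr),\qquad
X_7=2^{11}\psi(q)^7\,\frac{b+32t}{b^2}.
\]
The right side of \eqref{eq:X3} equals $\psi(q)^3\bigl((b+16t)^2+4t^2\bigr)/b^2$. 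The difference is therefore $7\psi(q)^3(17b^2+2336bt+37412t^2)/b^2$, which is exactly your coefficientwise check. Your route turns the modulo-$7$ input into a transparent polynomial verification. The paper's route is shorter on the page but depends on spotting the reshaping \eqref{r-3}.

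You do misplace the difficulty, though. $P_3=8\psi(q)^3(b+32t)/b^2$, $P_7=64J_2^{22}/J_1^{23}$ and $X_7$ are already exact two-term eta quotients. Their coefficients ($8,128$; $64$; $2048,32768$) simply reduce modulo $7$, so only $X_3$ carries extra terms. The facts $J_k^7\equiv J_{7k}$ and $\phi(q)^7\equiv\phi(q^7)$ and Sturm's bound play no role. Finally, in the variable $A=1+16t/b$ the coefficients lie in $\mathbb Z[1/2]$ rather than $\mathbb Z$; this is harmless modulo $7$, or you can use $t/b=qJ_4^8/J_1^8$ as the variable instead.
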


\begin{proof} For a formal power series $F(q)$, if 
  $F(q)=F_0(q^2)+qF_1(q^2)$, then its even and odd coefficient
generating functions are $F_0(q)$ and $F_1(q)$, respectively.  We use
this elementary extraction repeatedly.

 Hirschhorn and Sellers \cite{Hirschhorn-1} proved that 
\begin{equation}\label{eq:barp-four-three}
 \sum_{n\geq0}\overline p(4n+3)q^n
 =8\frac{J_2J_4^6}{J_1^8}.
\end{equation}
It follows from \cite[(2.11)]{Xia-Yao-2013} that 
\begin{equation}\label{eq:two-dissection}
	\frac1{J_1^4}
	=\frac{J_4^{14}}{J_2^{14}J_8^4}
	+4q\frac{J_4^2J_8^4}{J_2^{10}}.
\end{equation}
Squaring \eqref{eq:two-dissection} gives
\begin{equation}\label{eq:J1-eight-dissection}
 \frac1{J_1^8}
 =\frac{J_4^{28}}{J_2^{28}J_8^8}
 +8q\frac{J_4^{16}}{J_2^{24}}
 +16q^2\frac{J_4^4J_8^8}{J_2^{20}}.
\end{equation}
Substituting \eqref{eq:J1-eight-dissection}
  into \eqref{eq:barp-four-three} gives
\[
 \sum_{n\geq0}\overline p(4n+3)q^n
 =8\frac{J_4^{34}}{J_2^{27}J_8^8}
 +64q\frac{J_4^{22}}{J_2^{23}}
 +128q^2\frac{J_4^{10}J_8^8}{J_2^{19}}.
\]
The first and third terms contain only even powers of $q$, while the
middle term contains only odd powers.  Thus
\begin{align*}
 P_3(q)&=8\frac{J_2^{34}}{J_1^{27}J_4^8}
       +128q\frac{J_2^{10}J_4^8}{J_1^{19}},\qquad 
 P_7(q) =64\frac{J_2^{22}}{J_1^{23}}.
\end{align*}
Reducing the numerical coefficients modulo $7$ proves
\eqref{eq:P3} and \eqref{eq:P7}.

It follows from \cite[Entry 25 (i) and (ii), p. 40]{Berndt1991} that 
\[
 \phi(-q)=\phi(q^4)-2q\psi(q^8). 
\]
Therefore, 
\begin{align}\label{3-12}
 \phi(-q)^3
 ={}&\phi(q^4)^3-6q\phi(q^4)^2\psi(q^8)
 +12q^2\phi(q^4)\psi(q^8)^2-8q^3\psi(q^8)^3.
\end{align}
The four terms are supported on exponents congruent to $0$, $1$, $2$,
and $3$ modulo $4$, respectively.  In light of \eqref{c} and \eqref{3-12},
\begin{equation}\label{eq:c-four-three}
 \sum_{n\geq0}c(4n+3)q^n
 =-8\psi(q^2)^3=-8\frac{J_4^6}{J_2^3}.
\end{equation}
Here the last equality follows from the product formula for
\(\psi(q)\) in \eqref{eq:theta-products}.
The right side of \eqref{eq:c-four-three} contains only even powers.
Consequently,
\begin{equation}\label{eq:c-eight-seven}
	c(8n+7)=0 
\end{equation} 
and 
\[
 C_3(q)=-8\frac{J_2^6}{J_1^3},
\]
which proves \eqref{eq:C3} modulo $7$.

It remains to treat $x(n)$.

It follows from \cite[Entry 25 (i), (ii) and (iii), p. 40]{Berndt1991} that
\begin{align}
\phi(q)\phi(-q)&=\phi(-q^2)^2,
 \label{3-13}\\ 
\phi(q)&=\phi(q^4)+2q\psi(q^8) \label{3-14}. 
\end{align}
By \eqref{3-13} and \eqref{3-14}, 
\begin{align}\label{m-1}
\frac1{\phi(-q)}
=\frac{\phi(q^4)+2q\psi(q^8)}{\phi(-q^2)^2}.
\end{align}
It follows from \cite[Entry 25   (vii), p. 40]{Berndt1991} that
\begin{align}\label{t-1}
 \phi(q)^4=\phi(-q)^4+16q\psi(q^2)^4.
\end{align}
Combining \eqref{x} and \eqref{t-1}  yields 
\begin{align}\label{m-2}
 \sum_{n\geq0}x(n)q^n
 &=\phi(-q)^3+16q\frac{\psi(q^2)^4}{\phi(-q)} . 
\end{align}
Substituting  \eqref{3-12} and \eqref{m-1} into \eqref{m-2},
   extracting the odd powers, dividing by \(q\), and replacing \(q^2\) by \(q\), we obtain 
\begin{align}
	\sum_{n\geq0}x(2n+1)q^n
	&=-6\phi(q^2)^2\psi(q^4)-8q\psi(q^4)^3 +16\frac{\psi(q)^4 \phi(q^2)}{\phi(-q)^2} \nonumber\\
	&\equiv \frac{2}{J_1^8}\cdot \frac{J_2^8J_4^5}{J_8^2}
	+\frac{J_4^9}{J_2^4J_8^2}
	-q\frac{J_8^6}{J_4^3} . \label{m-3}
\end{align}
Substituting \eqref{eq:J1-eight-dissection} into \eqref{m-3}, extracting the odd powers, dividing by \(q\), and replacing \(q^2\) by \(q\), we get
\begin{equation}\label{eq:x-four-three-exact}
 \sum_{n\geq0}x(4n+3)q^n
   \equiv \frac{J_2^{21}}{J_4^2}
   \left(\frac{2}{J_1^{16}}-\frac{J_4^8}{J_2^{24}}\right)
  .
\end{equation}
Replacing $q$ by $q^2$ in \eqref{t-1}   gives 
\[
\frac{J_4^{20}}{J_2^8J_8^8}
=
\frac{J_2^8}{J_4^4}
+16q^2\frac{J_8^8}{J_4^4}.
\]
Multiplying both sides of the above identity by
\(J_4^8/J_2^{20}\) yields
\begin{align}
 \left(\frac{J_4^{14}}{J_2^{14}J_8^4}\right)^2
 -16q^2\left(\frac{J_4^2J_8^4}{J_2^{10}}\right)^2
 =\frac{J_4^4}{J_2^{12}}.\label{r-2}
\end{align}
Squaring both sides of \eqref{r-2} and reducing the numerical coefficients modulo
$7$, we obtain
\begin{align}
	\frac{J_4^8}{J_2^{24}}
	\equiv{}&
	\left(\frac{J_4^{14}}{J_2^{14}J_8^4}\right)^4+3q^2
	\left(\frac{J_4^{14}}{J_2^{14}J_8^4}\right)^2
	\left(\frac{J_4^2J_8^4}{J_2^{10}}\right)^2+4q^4
	\left(\frac{J_4^2J_8^4}{J_2^{10}}\right)^4
	 .\label{r-3}
\end{align} 
In view of \eqref{eq:two-dissection} and \eqref{r-3}, 
\begin{align}\label{m-4}
\frac{2}{J_1^{16}}-\frac{J_4^8}{J_2^{24}}&\equiv 2\left(
 \frac{J_4^{14}}{J_2^{14}J_8^4}
 +4q\frac{J_4^2J_8^4}{J_2^{10}}
 \right)^4 
 \nonumber\\
 &\qquad - \left(	\left(\frac{J_4^{14}}{J_2^{14}J_8^4}\right)^4+3q^2
 \left(\frac{J_4^{14}}{J_2^{14}J_8^4}\right)^2
 \left(\frac{J_4^2J_8^4}{J_2^{10}}\right)^2+4q^4
 \left(\frac{J_4^2J_8^4}{J_2^{10}}\right)^4 \right)  \nonumber\\
&\equiv
 \left(\frac{J_4^{14}}{J_2^{14}J_8^4}\right)^4
 +4q\left(\frac{J_4^{14}}{J_2^{14}J_8^4}\right)^3
       \left(\frac{J_4^2J_8^4}{J_2^{10}}\right) \nonumber \\
&\qquad
 +q^3\left(\frac{J_4^{14}}{J_2^{14}J_8^4}\right)
       \left(\frac{J_4^2J_8^4}{J_2^{10}}\right)^3
 +4q^4\left(\frac{J_4^2J_8^4}{J_2^{10}}\right)^4
 .
\end{align}
Substituting \eqref{m-4} into \eqref{eq:x-four-three-exact} gives 
\begin{align}\label{eq:x-four-dissection}
 \sum_{n\geq0}x(4n+3)q^n
 \equiv{}&\frac{J_4^{54}}{J_2^{35}J_8^{16}}
 +4q\frac{J_4^{42}}{J_2^{31}J_8^8}  +q^3\frac{J_4^{18}J_8^8}{J_2^{23}}
 +4q^4\frac{J_4^6J_8^{16}}{J_2^{19}}
  .
\end{align}
Extracting the even powers from \eqref{eq:x-four-dissection} proves
\eqref{eq:X3}; extracting the odd powers proves \eqref{eq:X7}.
\end{proof}

\begin{lemma}\label{lem:sparse}
For $n\geq 0$, 
\begin{equation}\label{eq:sparse-conclusion}
 x(8(7n+r)+3)\equiv0\pmod7
 \qquad(r=1,4,5,6).
\end{equation}
\end{lemma}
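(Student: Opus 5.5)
The plan is to start from the congruence \eqref{eq:X3} of Lemma~\ref{lem:dissections}, namely
\[
X_3(q)\equiv \frac{J_2^{54}}{J_1^{35}J_4^{16}}+4q^2\frac{J_2^6J_4^{16}}{J_1^{19}},
\]
and to show that the $7$-dissection of the right-hand side has no components in the residue classes $1,4,5,6$ modulo $7$. The first step is to peel off seventh powers using $J_m^7\equiv J_{7m}\pmod 7$, so that each term becomes (a series in $q^7$) $\times$ (a low-weight theta quotient). For the first term I would write $J_1^{-35}\equiv J_7^{-5}$, $J_2^{54}\equiv J_{14}^7J_2^5$ and $J_4^{-16}\equiv J_{28}^{-2}J_4^{-2}$. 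This leaves the factor $J_2^5/J_4^2=\phi(-q^2)^2J_2$. For the second term I would write $J_1^{-19}\equiv J_7^{-3}J_1^{2}$ and $J_4^{16}\equiv J_{28}^2J_4^2$, leaving $J_1^2J_2^6J_4^2$. The exponents here can be shifted by multiples of $7$ to reach a more convenient combination. Two candidates are $J_1^2J_2^{-1}J_4^2=\phi(-q)J_4^2$, or a form involving $\psi(q^2)=J_4^2/J_2$ and $J_1$.

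The second step is to obtain explicit $7$-dissections of the surviving factors. For $J_1$ (and hence $J_2,J_4$ after $q\mapsto q^2,q^4$) I would use Euler's pentagonal theorem in its quintuple-product $7$-dissection. In this dissection $J_1$ splits into $J_{49}$ times three theta quotients, living in the classes $0,1,2$ and $5$ modulo $7$, since $k(3k-1)/2 \bmod 7$ takes only those values. For $\phi(-q)$ and $\psi(q)$ I would use the analogous $7$-dissections coming from splitting $r\bmod 7$ in $\sum q^{r^2}$ and $\sum q^{r(r+1)/2}$. The exponent $r^2$ lies in $\{0,1,2,4\}$ modulo $7$, and $r(r+1)/2$ lies in $\{0,1,3,6\}$ modulo $7$. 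These are the two specializations of the quintuple product identity I expect to need.

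The third step multiplies out these dissections, including the prefactor $q^2$ in the second term, and collects the components in the classes $q^{7n+1},q^{7n+4},q^{7n+5},q^{7n+6}$. A pure residue count will kill some products outright, because the relevant quadratic forms simply miss those classes. This is the first thing to check for each term separately. Where the residue count does not suffice, the remaining components of the two terms must cancel against each other modulo $7$. To prove such a cancellation, I would express every surviving theta quotient in $q^7$ through a $(p,k)$-parameterization of the level-$14$/$28$ divisor-sum series, reducing the claim to a polynomial identity in $p$ modulo $7$. If the identity involves derivatives, as is typical when a factor like $\phi(-q^2)^2$ is traded for a divisor sum $\sum q^n/(1+q^n)^2$, I would use termwise logarithmic differentiation of the product forms to supply the needed relations.

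The main obstacle is this cancellation step. The four residue classes in the statement are presumably exactly the classes where neither term vanishes individually but the two terms cancel. Pinning down the correct normalization of the two terms, so that the coefficient $4$ produces cancellation modulo $7$, will require a careful choice of which power of $J_1,J_2,J_4$ to absorb into seventh powers. My first attempt will be the choice that makes both residual factors products of $\phi$, $\psi$ and a single $J$. The hope is that the parameterized identity then becomes a low-degree polynomial congruence that can be checked directly.
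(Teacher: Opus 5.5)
There is a genuine gap. Your reduction step is right and produces exactly the paper's residual factors: after $J_m^7\equiv J_{7m}$,
\[
X_3(q)\equiv \frac{J_{98}}{J_7^5J_{28}^2}\cdot\frac{J_2^5}{J_4^2}
+4q^2\,\frac{J_{14}J_{28}^2}{J_7^3}\cdot\frac{J_1^2J_4^2}{J_2}.
\]
The trouble is everything after that. Dissecting the factors $J_2$, $\phi(-q^2)$, $\phi(-q)$ and $J_4$ separately gives no residue obstruction at all. The exponents $2r^2$ of $\phi(-q^2)$ lie in $\{0,1,2,4\}$ modulo $7$, and sums of two such residues already cover all seven classes. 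Similarly, the exponents $2k(3k-1)$ of $J_4$ lie in $\{0,1,4,6\}$, and $J_4^2$ already hits every class. So your ``pure residue count'' kills nothing, and the entire proof is pushed into the cancellation step. You leave that step unspecified, and you aim it at the wrong mechanism. There is no cancellation between the two terms (the coefficient $4$ is irrelevant), and this lemma needs no $(p,k)$-parameterization or differentiation. As written, the proposal does not establish the vanishing in any of the four classes.

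The missing idea is to treat each residual factor as a single weighted series coming from the quintuple product identity, not as a product of separately dissected factors:
\[
\frac{J_2^5}{J_4^2}=\sum_{m\in\mathbb Z}(6m+1)q^{m(3m+1)},\qquad
\frac{J_1^2J_4^2}{J_2}=\sum_{m\in\mathbb Z}(3m+1)q^{m(3m+2)}.
\]
Modulo $7$, the exponent $m(3m+1)$ never lies in the classes $1,5,6$. It lies in class $4$ only for $m\equiv1$, where $6m+1\equiv0$. Likewise $m(3m+2)$ avoids $3,4,6$ and reaches class $2$ only for $m\equiv2$, where $3m+1\equiv0$; after the factor $q^2$, this term is supported on $\{2,3,0\}$. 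The weight is, up to a constant, the derivative in $m$ of the exponent, which is why it vanishes exactly at the double root. Since the prefactors are series in $q^7$, each term separately is supported on the classes $0,2,3$, and this is the lemma.

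Note also that the splittings of $\sum q^{r^2}$ and $\sum q^{r(r+1)/2}$, which you call ``the two specializations of the quintuple product identity,'' are triple-product facts. The two quintuple-product specializations that actually do the work are the ones displayed above.
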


\begin{proof}
We use the dissection formula \eqref{eq:X3} from
Lemma~\ref{lem:dissections}.
The binomial theorem gives, for every positive integer $m$,
\begin{equation}\label{eq:freshman}
 J_m^7\equiv J_{7m} .
\end{equation}
Applying \eqref{eq:freshman} to \eqref{eq:X3}, we find
\begin{equation}\label{eq:X3-sparse}
 X_3(q)\equiv
 \frac{J_{98}}{J_7^5J_{28}^2} \cdot \frac{J_2^5}{J_4^2}
 +4q^2\frac{J_{14}J_{28}^2}{J_7^3} \cdot 
       \frac{J_1^2J_4^2}{J_2}.
\end{equation}
The following two specializations of Watson's quintuple product
identity \cite{Watson} are recorded in
\cite[(1.3.60) and (1.3.61)]{Berndt-1}:
\begin{align}
 \frac{J_1^5}{J_2^2}
 &=\sum_{m\in\mathbb Z}(6m+1)q^{m(3m+1)/2},
 \label{eq:quintuple-one}\\
 \frac{J_1^2J_4^2}{J_2}
 &=\sum_{m\in\mathbb Z}(3m+1)q^{m(3m+2)}.
 \label{eq:quintuple-two}
\end{align}
Use \eqref{eq:quintuple-one} with $q$ replaced by $q^2$ in the first
term of \eqref{eq:X3-sparse}, and use \eqref{eq:quintuple-two} in the
second term.  This gives
\[
\begin{split}
 X_3(q)\equiv{}&
 \frac{J_{98}}{J_7^5J_{28}^2}
 \sum_{m\in\mathbb Z}(6m+1)q^{m(3m+1)} 
  +4q^2\frac{J_{14}J_{28}^2}{J_7^3}
 \sum_{m\in\mathbb Z}(3m+1)q^{m(3m+2)}.
\end{split}
\]
The product factors outside the sums contain only powers of $q^7$.
Checking $m=7k+r$ with $k\in \mathbb{Z}$
 and $r=0,1,\ldots,6$  shows that the first sum has nonzero
coefficients modulo $7$ only in exponent classes $0$, $2$, and $3$.
After the factor $q^2$ is included, the same is true of the second
sum.  This proves  
\eqref{eq:sparse-conclusion}.
\end{proof}

  For the remainder of the paper, put
\begin{equation}\label{eq:bt}
 b:=\frac{J_1^8}{J_2^4}=\phi(-q)^4,
 \qquad
 t:=q\frac{J_4^8}{J_2^4}=q\psi(q^2)^4.
\end{equation}
Define
\begin{align*}
 L(q)&:=1-24\sum_{n\geq1}\sigma_1(n)q^n,\\
 M(q)&:=1+240\sum_{n\geq1}\sigma_3(n)q^n,\\
 N(q)&:=1-504\sum_{n\geq1}\sigma_5(n)q^n,
\end{align*}
where
\[
\sigma_r(n):=\sum_{d|n,\ d>0} d^r.
\]

\begin{lemma}\label{lem:pk}
One has
\begin{align}
 M(q^2)&=b^2+16bt+256t^2,\label{eq:M-bt}\\
 N(q^2)&=(b+32t)(b+8t)(b-16t),\label{eq:N-bt}\\
 2L(q^2)-L(q)&=b+32t.\label{eq:L-bt}
\end{align}
\end{lemma}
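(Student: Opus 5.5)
The plan is to reduce all three identities to classical theta-function evaluations of Eisenstein series, phrased through Jacobi's $a,b,c$ (here $\phi(q)$, $\phi(-q)$, and $2q^{1/4}\psi(q^2)$ style quantities). Put $A:=\phi(q)^4$ and $B:=\phi(-q)^4=b$. By \eqref{t-1} we have $A=b+16t$. Jacobi's identity is the statement that $16t=16q\psi(q^2)^4$ is the fourth power of the third theta function $\theta_2(q)^4=16q\psi(q^2)^4$, and \eqref{t-1} is exactly $\theta_3^4=\theta_4^4+\theta_2^4$. So in the usual notation $\theta_3^4=b+16t$, $\theta_4^4=b$, and $\theta_2^4=16t$.

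The key input is the classical formulas (Ramanujan, see Berndt's Part III, Entry 13 of Chapter 17, or \cite{Berndt-1}) expressing $M(q^2)$ and $N(q^2)$ in terms of $z=\phi(q)^2$ and $x$, where $x=\theta_2^4/\theta_3^4$. These are
\[
M(q^2)=z^4(1-x+x^2),\qquad N(q^2)=z^6(1+x)(1-x/2)(1-2x).
\]
With $z^4=A=b+16t$ and $z^4x=16t$, $z^4(1-x)=b$, the first gives
\[
M(q^2)=A^2-A\cdot 16t+256t^2=(b+16t)^2-16t(b+16t)+256t^2=b^2+16bt+256t^2,
\]
which is \eqref{eq:M-bt}. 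For \eqref{eq:N-bt}, the three factors become
\[
z^4(1+x)=b+32t,\qquad z^4(1-x/2)=b+8t,\qquad z^4(1-2x)=b-16t.
\]
Their product gives \eqref{eq:N-bt}.

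For \eqref{eq:L-bt}, I would use the standard evaluation $2L(q^2)-L(q)=\phi(q)^4+\theta_2^4$-type formula. Concretely, $2L(q^2)-L(q)=1+24\sum_{n\ge1}\frac{nq^n}{1+(-q)^n}$, and by Jacobi's four-squares-type theorem this combination equals $z^2(1+x)=A+16t=b+32t$. Alternatively, I would take the logarithmic derivative of $\phi(-q)=J_1^2/J_2$ and $\psi(q^2)=J_4^2/J_2$ via $q\frac{d}{dq}\log J_m=\frac{m}{24}(1-L(q^m))$, together with the known formula for $L$ in terms of $z$, $x$ and $dz/dx$.

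If one prefers a self-contained route, an alternative is to verify each identity as an identity of modular forms for $\Gamma_0(4)$ of weights $4,6,2$. Both sides are holomorphic modular forms, since $b$ and $t$ are weight-$2$ forms on $\Gamma_0(4)$ (up to the usual character issues). It then suffices, by Sturm's bound, to compare a handful of initial coefficients.

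The main obstacle is bookkeeping the normalization between Ramanujan's $(z,x)$ at argument $q$ versus $q^2$. I must check that $M(q^2)$, not $M(q)$, corresponds to $z^4(1-x+x^2)$ with $z=\phi(q)^2$; I would confirm this by matching the first two coefficients, $b=1-8q+\dots$ and $t=q+\dots$. This check gives $M(q^2)=1+240q^2+\cdots$ and $b^2+16bt+256t^2=1-16q+16q+\cdots$, consistent at order $q$.
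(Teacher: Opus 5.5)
\textbf{Verdict.} Your route is genuinely different from the paper's, and it is sound for \eqref{eq:M-bt} and \eqref{eq:N-bt} once a normalization slip is fixed. The concrete step you give for \eqref{eq:L-bt}, however, is false.

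\textbf{The normalization slip.} With $z=\phi(q)^2$ one has $z^2=\phi(q)^4=b+16t$, $z^2x=16t$ and $z^2(1-x)=b$; your ``$z^4$'' should be $z^2$ throughout. Your computation of $M(q^2)$ is in fact the one with $z^2=A$. For $N(q^2)=z^6(1+x)(1-\tfrac12x)(1-2x)$ the three factors must be $z^2(1+x)=b+32t$, $z^2(1-\tfrac12x)=b+8t$ and $z^2(1-2x)=b-16t$. With $z^4$ on each factor, as you wrote them, the product would carry $z^{12}$ rather than $z^6$.

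\textbf{The false step for \eqref{eq:L-bt}.} The identity $2L(q^2)-L(q)=1+24\sum_{n\ge1}nq^n/(1+(-q)^n)$ does not hold. By Jacobi, $1+8\sum_{n\ge1}nq^n/(1+(-q)^n)=\phi(q)^4$, so your right side equals $3\phi(q)^4-2=3b+48t-2$. This already differs from $b+32t$ at $q^2$ ($72$ versus $24$). The correct expansion is
\[
2L(q^2)-L(q)=1+24\sum_{n\geq1}\frac{(2n-1)q^{2n-1}}{1-q^{2n-1}},
\]
whose $n$th coefficient is $24$ times the sum of the odd divisors of $n$. This matches $b+32t$ coefficientwise by Jacobi's formula $r_4(n)=8\sum_{d\mid n,\,4\nmid d}d$ together with $q\psi(q^2)^4=\sum_{m\ \mathrm{odd}}\sigma_1(m)q^m$.

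\textbf{Two other ways to repair it.} Your second suggestion becomes a one-liner once made precise. Ramanujan's formulas
\[
L(q)=(1-5x)z^2+12x(1-x)z\,\tfrac{dz}{dx},\qquad L(q^2)=(1-2x)z^2+6x(1-x)z\,\tfrac{dz}{dx}
\]
give $2L(q^2)-L(q)=(1+x)z^2=b+32t$, because the derivative terms cancel. Your Sturm fallback also works for all three identities. Here $b,t\in M_2(\Gamma_0(4))$ with trivial character, and $2L(q^2)-L(q)$ is a holomorphic weight-$2$ form on $\Gamma_0(2)$. The Sturm bound for $\Gamma_0(4)$ is $k/2$, so it suffices to check coefficients through $q^1$, $q^2$, $q^3$ in weights $2$, $4$, $6$.

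\textbf{Comparison with the paper.} The paper uses the Alaca--Williams $(p,k)$-parameterization, which is built from $\phi(q)$ and $\phi(q^3)$. It therefore has to expand polynomials of degree up to $12$ in $p$ to match their (3.70), (3.74) and (3.84). Your $(z,x)$ parameterization is the natural level-$4$ one: $b$ and $16t$ are simply $z^2(1-x)$ and $z^2x$. Each identity thus becomes a one-line substitution, and the factorization of $N(q^2)$ is read off from Ramanujan's factored form instead of being verified by multiplication. The Sturm route goes further, replacing both sets of external formulas with a modular-forms argument plus a finite coefficient check.
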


\begin{proof}
Let
\begin{equation*} 
	p=p(q):=
	\frac{\phi(q)^2-\phi(q^3)^2}
	{2\phi(q^3)^2},
	\qquad
	k=k(q):=
	\frac{\phi(q^3)^3}{\phi(q)}.
\end{equation*}
These are the \((p,k)\)-parameters introduced by Alaca and
Williams \cite[Section~3]{AlacaWilliams}.  
 In their notation, equations
(3.66)--(3.68) give
\begin{equation}\label{eq:pk-bt}
 b=(1+p)(1-p)^3k^2,
 \qquad
 16t=p(2+p)^3k^2.
\end{equation}
Their equations (3.70), (3.74), and (3.84), with the sign in (3.84)
reversed to match the present order, give
\begin{align}
 M(q^2)={}&(1+4p+64p^2+178p^3+235p^4+178p^5
 +64p^6+4p^7+p^8)k^4, \label{m-5}\\
 N(q^2)={}&\bigl(1+6p-114p^2-625p^3-\tfrac{4059}{2}p^4
 -4302p^5-5556p^6 \nonumber \\
 &\qquad{}-4302p^7-\tfrac{4059}{2}p^8-625p^9
 -114p^{10}+6p^{11}+p^{12}\bigr)k^6, \label{m-6}\\
 2L(q^2)-L(q)={}&(1+14p+24p^2+14p^3+p^4)k^2.  \label{m-7}
\end{align}
Substitution of \eqref{eq:pk-bt} gives, by direct multiplication,
\begin{align*}
b^2+16bt+256t^2
={}&(1+4p+64p^2+178p^3+235p^4+178p^5
+64p^6+4p^7+p^8)k^4,\\
(b+32t)(b+8t)(b-16t)
={}&\bigl(1+6p-114p^2-625p^3-\tfrac{4059}{2}p^4 
-4302p^5-5556p^6 \\
&\qquad{}-4302p^7
-\tfrac{4059}{2}p^8-625p^9-114p^{10}+6p^{11}+p^{12}\bigr)k^6,\\
b+32t
={}&(1+14p+24p^2+14p^3+p^4)k^2.
\end{align*}
Comparison with \eqref{m-5}, \eqref{m-6}, and \eqref{m-7}
proves \eqref{eq:M-bt}--\eqref{eq:L-bt}.
\end{proof}

We shall use the differential operator  
\[
\Theta:=q\frac{d}{dq}.
\]

\begin{lemma}\label{lem:logarithmic}
One has
\begin{align}
 \Theta\log(t/b)=\phi(q)^4=b+16t.\label{vv-0}
\end{align}
\end{lemma}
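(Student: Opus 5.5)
The plan is to compute the left-hand side of \eqref{vv-0} directly from the product forms in \eqref{eq:bt}. Then I identify the result with the classical Jacobi four-squares generating function. The second equality, $\phi(q)^4=b+16t$, is immediate: by \eqref{eq:bt} and \eqref{eq:theta-products}, $b=\phi(-q)^4$ and $t=q\psi(q^2)^4$, so it is exactly \eqref{t-1}.

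For the first equality, note that
\[
\frac tb=q\frac{J_4^8}{J_1^8},
\qquad
\Theta\log\frac tb=1+8\,\Theta\log J_4-8\,\Theta\log J_1 .
\]
Differentiating $\log J_1=\sum_{n\ge1}\log(1-q^n)$ termwise gives
\[
\Theta\log J_1=-\sum_{n\ge1}\frac{nq^n}{1-q^n}=-\sum_{n\ge1}\sigma_1(n)q^n=\frac{L(q)-1}{24}.
\]
Since $J_4(q)=J_1(q^4)$ and $\Theta$ applied to $F(q^4)$ equals $4(\Theta F)(q^4)$, we also get $\Theta\log J_4=\tfrac{4}{24}\bigl(L(q^4)-1\bigr)$. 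Substituting both expressions yields
\[
\Theta\log\frac tb=1+\frac{32(L(q^4)-1)-8(L(q)-1)}{24}=\frac{4L(q^4)-L(q)}{3}.
\]

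It remains to recognise $\tfrac13\bigl(4L(q^4)-L(q)\bigr)$ as $\phi(q)^4$. Comparing coefficients, the coefficient of $q^n$ on the left is $8\bigl(\sigma_1(n)-4\sigma_1(n/4)\bigr)$, with $\sigma_1(n/4):=0$ when $4\nmid n$. This equals $8\sum_{d\mid n,\,4\nmid d}d$, and by Jacobi's four-squares theorem \cite{Berndt-1} that is the coefficient of $q^n$ in $\phi(q)^4$. This citation is the only genuinely external input, so I expect it to be the main point requiring care.

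If a self-contained argument is preferred, I would avoid Jacobi's theorem as follows. Write $\phi(q)^4=J_2^{20}/(J_1^8J_4^8)$, and check that the logarithmic derivative of $(t/b)/\phi(q)^4$ is consistent with \eqref{eq:L-bt}. A cleaner route is to express $4L(q^4)-L(q)$ through $2L(q^2)-L(q)=b+32t$ from Lemma~\ref{lem:pk}. One then needs $2L(q^4)-L(q^2)$, which is the same identity with $q$ replaced by $q^2$, and must rewrite it in terms of $b,t$ via the standard duplication relations $\phi(q^2)^2=\tfrac12(\phi(q)^2+\phi(-q)^2)$ and $\psi(q)^2=\phi(q)\psi(q^2)$. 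Since this bookkeeping is heavier, I would present the proof via Jacobi's identity.
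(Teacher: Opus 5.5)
Your proposal is correct and follows essentially the same route as the paper. Like the paper, you logarithmically differentiate $t/b=qJ_4^8/J_1^8$ termwise to get $1+8\sum_{n\ge1}\bigl(\sigma_1(n)-4\sigma_1(n/4)\bigr)q^n$, identify this with $\phi(q)^4$ by Jacobi's four-squares identity (the paper cites exactly this Lambert-series form from Berndt's book), and obtain $\phi(q)^4=b+16t$ from \eqref{t-1}.
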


\begin{proof}
	Termwise logarithmic differentiation gives
	\begin{equation}\label{eq:log-J}
		\Theta\log J_m=\frac{m}{24}\bigl(L(q^m)-1\bigr).
	\end{equation}
Since $t/b=qJ_4^8/J_1^8$, equation \eqref{eq:log-J} gives 
\begin{align*}
 \Theta\log(t/b)
 &=1+8\Theta\log J_4-8\Theta\log J_1\\
 &=1+8\sum_{n\geq1}\frac{nq^n}{1-q^n}
   -32\sum_{n\geq1}\frac{nq^{4n}}{1-q^{4n}}\\
 & 
 =\phi(q)^4.
\end{align*}
The last equality  follows from \cite[(3.3.7)]{Berndt-1}.  Finally,  \eqref{t-1} and \eqref{eq:bt} give
$\phi(q)^4=b+16t$.
\end{proof}

\begin{lemma}\label{lem:differential-N}
One has
\begin{equation}\label{eq:differential-N}
 \Theta N(q^2)=L(q^2)N(q^2)-M(q^2)^2.
\end{equation}
\end{lemma}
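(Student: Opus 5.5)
This is Ramanujan's classical differential equation $q\,dN/dq=(LN-M^2)/2$, applied with $q$ replaced by $q^2$. The factor $2$ from the chain rule absorbs the $1/2$. Precisely, Ramanujan's identities \cite[Chapter~4]{Berndt-1} state
\[
 \Theta L(q)=\frac{L(q)^2-M(q)}{12},\qquad
 \Theta M(q)=\frac{L(q)M(q)-N(q)}{3},\qquad
 \Theta N(q)=\frac{L(q)N(q)-M(q)^2}{2}.
\]
Write $F(q):=N(q)$. Then $\Theta\bigl(F(q^2)\bigr)=2\,(\Theta F)(q^2)$, since $q\frac{d}{dq}F(q^2)=2q^2F'(q^2)$. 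Applying the third identity at $q^2$ gives
\[
 \Theta N(q^2)=2\cdot\frac{L(q^2)N(q^2)-M(q^2)^2}{2}=L(q^2)N(q^2)-M(q^2)^2,
\]
which is \eqref{eq:differential-N}.

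So the plan is simply to cite Ramanujan's differential equation and apply the chain rule for $\Theta$ under $q\mapsto q^2$. If a self-contained argument is preferred, I would check the identity as an equality of weight-$8$ quasimodular forms. Using $\Theta L=(L^2-M)/12$ and $\Theta M=(LM-N)/3$, the series $\Theta N-\tfrac12(LN-M^2)$ is a modular form of weight $8$ on $\mathrm{SL}_2(\mathbb Z)$, hence a multiple of $M^2$. Comparing constant terms gives $0-\tfrac12(1-1)=0$, so the multiple is zero.

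The only real content, and the main obstacle in a fully elementary treatment, is Ramanujan's identity for $\Theta N$ itself. If one wanted to stay within the paper's $(p,k)$ framework, an alternative is to use Lemma~\ref{lem:pk} together with Lemma~\ref{lem:logarithmic}:
\begin{itemize}
\item express $L(q^2)$ in terms of $b$, $t$ and a logarithmic derivative;
\item differentiate $N(q^2)=(b+32t)(b+8t)(b-16t)$ using $\Theta\log(t/b)=b+16t$ and a formula for $\Theta\log b$;
\item verify the result as a polynomial identity.
\end{itemize}
The derivative $\Theta b$, however, involves $L$ in a way requiring exactly the quasimodular input above. For that reason I would instead invoke Ramanujan's classical result directly.
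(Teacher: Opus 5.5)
Your proof is correct. Ramanujan's equation $\Theta N=\tfrac12(LN-M^2)$ together with $\Theta\bigl(F(q^2)\bigr)=2(\Theta F)(q^2)$ gives the lemma at once. Your modular-forms backup is also sound, though the input it really needs is the transformation law of $L$ (so that $\Theta N-\tfrac12 LN$ is modular of weight $8$), not the other two differential equations.

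The paper takes a different route and never invokes Ramanujan's differential equations:
\begin{itemize}
\item It logarithmically differentiates the eta quotient $b=J_1^8/J_2^4$ termwise to get $\Theta\log b=\tfrac13\bigl(L(q)-L(q^2)\bigr)$.
\item It eliminates $L(q)$ using $2L(q^2)-L(q)=b+32t$ from Lemma~\ref{lem:pk}.
\item It combines this with $\Theta\log(t/b)=b+16t$ from Lemma~\ref{lem:logarithmic}, which yields
\[
\Theta b=\tfrac b3\bigl(L(q^2)-b-32t\bigr),\qquad \Theta t=\tfrac t3\bigl(L(q^2)+2b+16t\bigr).
\]
\item It differentiates $N(q^2)=(b+32t)(b+8t)(b-16t)$ by the chain rule in $b,t$. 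The coefficient of $L(q^2)$ is $N(q^2)$ by homogeneity, and the remaining part is checked to equal $-(b^2+16bt+256t^2)^2=-M(q^2)^2$.
\end{itemize}

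Your route is shorter and rests on a standard classical theorem. The paper's route costs a polynomial verification, but it produces the two formulas for $\Theta b$ and $\Theta t$ as a by-product. These are exactly the differential rules \eqref{eq:db}--\eqref{eq:dt} that drive Lemmas~\ref{lem:P3C3} and~\ref{lem:X7P7}. If your proof replaced the paper's, those rules would still have to be derived separately.

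Your closing remark is mistaken: computing $\Theta b$ in the $(p,k)$ framework needs no quasimodular input. It follows from termwise logarithmic differentiation of the product for $b$ plus the identity \eqref{eq:L-bt}, which is precisely how the paper proceeds.
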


\begin{proof} 
We use the parameter identities of Lemma~\ref{lem:pk} and the
logarithmic differentiation formula established in
Lemma~\ref{lem:logarithmic}.
 Since
$
b=\frac{J_1^8}{J_2^4} 
$,    equation \eqref{eq:log-J} gives
\begin{align}
	\Theta\log b
	&=
	8\Theta\log J_1-4\Theta\log J_2 \nonumber \\
	&=
	\frac{8}{24}\bigl(L(q)-1\bigr)
	-\frac{8}{24}\bigl(L(q^2)-1\bigr) \nonumber \\
	&=
	\frac13\bigl(L(q)-L(q^2)\bigr). \label{m-8}
\end{align}
In view of \eqref{eq:L-bt}, 
 \[
L(q)-L(q^2)=L(q^2)-b-32t.
\]
Combining this identity with \eqref{m-8}, we obtain 
\[
\Theta\log b
=
\frac13\bigl(L(q^2)-b-32t\bigr).
\]
Since
$\Theta b=b\,\Theta\log b,
$
it follows that
\begin{align}\label{m-9}
	\Theta b
	=
	\frac{b}{3}\bigl(L(q^2)-b-32t\bigr).
\end{align}

Since
\[
\log t=\log b+\log\frac{t}{b},
\]
we obtain
\begin{align*}
	\Theta\log t
	&=
	\Theta\log b+\Theta\log\frac{t}{b}\\
	&=
	\frac13\bigl(L(q^2)-b-32t\bigr)+b+16t\qquad ({\rm by}\ \eqref{vv-0})\\
	&=
	\frac13\bigl(L(q^2)+2b+16t\bigr).
\end{align*}
Finally,
$
\Theta t=t\,\Theta\log t,
$
and hence
\begin{align}
	\Theta t
	=
	\frac{t}{3}\bigl(L(q^2)+2b+16t\bigr).\label{m-10}
\end{align}

  On expanding
\eqref{eq:N-bt} as a polynomial in $b$ and $t$ and differentiating,
we find
\begin{align*}
 \Theta N(q^2)
 ={}&(3b^2+48bt-384t^2)\Theta b+(24b^2-768bt-12288t^2)\Theta t.
\end{align*}
Substitution of \eqref{m-9} and \eqref{m-10} 
reduces the right side to
\[
  \Theta N(q^2)=L(q^2)(b+32t)(b+8t)(b-16t)
 -(b^2+16bt+256t^2)^2.
\]
Equations \eqref{eq:M-bt} and \eqref{eq:N-bt} now give
\eqref{eq:differential-N}.
\end{proof}

\begin{lemma}\label{lem:P3C3}
	If \(m\ge0\) and \(m\equiv1,5,\) or \(6\pmod7\),   then
\begin{equation}\label{eq:p-c-relation}
 \overline p(8m+3)\equiv(m+3)^2c(8m+3)\pmod7.
\end{equation}
\end{lemma}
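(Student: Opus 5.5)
The plan is to rewrite both sides of \eqref{eq:p-c-relation} as generating functions in the variables $b,t$ of \eqref{eq:bt}, multiplied by the common factor $J_2^6/J_1^3=\psi(q)^3$. The multiplier $(m+3)^2$ will then be produced by the operator $(\Theta+3)^2$.

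\emph{Step 1 (the left side in terms of $C_3$).} From \eqref{eq:P3} and \eqref{eq:C3} I factor out $J_2^6/J_1^3\equiv -C_3(q)$. The second term of $P_3/(J_2^6/J_1^3)$ is $qJ_2^4J_4^8/J_1^{16}=t\,b^{-2}$. For the first term, \eqref{3-13} together with \eqref{t-1} gives
\[
b(b+16t)=\phi(-q)^4\phi(q)^4=\phi(-q^2)^8=\frac{J_2^{16}}{J_4^8},
\]
so $J_2^{28}/(J_1^{24}J_4^8)=b^{-3}\cdot J_2^{16}/J_4^8=b^{-2}(b+16t)$. Hence
\[
P_3(q)\equiv -C_3(q)\,b^{-2}(b+18t)\equiv -C_3(q)\,b^{-2}(b+4t).
\]
By \eqref{eq:freshman}, $b^7\equiv J_7^8/J_{14}^4$ is a series in $q^7$. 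I therefore write $b^{-2}=b^5\cdot b^{-7}$, so that
\[
P_3\equiv -b^{-7}\,b^5(b+4t)\,C_3 .
\]

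\emph{Step 2 (the right side via $\Theta$).} The series $\sum_m (m+3)^2c(8m+3)q^m$ equals $(\Theta+3)^2C_3$. I compute $\Theta C_3 = C_3\,\Theta\log(J_2^6/J_1^3)$ using \eqref{eq:log-J}, which gives $\tfrac14(2L(q^2)-L(q)-1)+\ldots$. By \eqref{eq:L-bt} this reduces to $C_3$ times a polynomial in $b$, $t$ and $L(q^2)$. A second application of $\Theta$ requires $\Theta b$, $\Theta t$ from \eqref{m-9}--\eqref{m-10}, together with $\Theta L(q^2)$. For the latter I use Ramanujan's $\Theta L=(L^2-M)/12$ at $q^2$ (equivalently one can go through Lemma~\ref{lem:differential-N}), and then eliminate $M(q^2)$ with \eqref{eq:M-bt}. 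The outcome will be
\[
(\Theta+3)^2C_3=C_3\cdot R\bigl(b,t,L(q^2)\bigr)
\]
for an explicit polynomial $R$, reduced modulo $7$.

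\emph{Step 3 (comparison on residue classes).} It remains to show that
\[
D:=P_3-(\Theta+3)^2C_3\equiv -C_3\bigl(b^{-7}b^5(b+4t)+R\bigr)
\]
has vanishing coefficients modulo $7$ at exponents $m\equiv1,5,6\pmod 7$. Two tools are available for this.
\begin{itemize}
\item Since $\Theta^7\equiv\Theta$, any piece of the form $(\Theta^k-\Theta^{k'})F$ is controlled explicitly on each class.
\item A product of a $q^7$-series with a theta quotient of the quintuple type \eqref{eq:quintuple-one}--\eqref{eq:quintuple-two} is supported on the classes where $6m+1$ or $3m+1$ is not divisible by $7$, exactly as in the proof of Lemma~\ref{lem:sparse}.
\end{itemize}
Concretely, I would aim to write $D$, modulo $7$, as a $q^7$-series times $\bigl(\text{a quotient such as }J_1^5/J_2^2\text{ or }J_1^2J_4^2/J_2\bigr)$ at argument $q$ or $q^2$, possibly after applying $\Theta$. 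Then I would read off its support as in Lemma~\ref{lem:sparse}. The remaining classes $0,2,3,4$ are precisely those allowed to survive.

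The main obstacle is Step~3: matching the polynomial $b^5(b+4t)$, which is multiplied by the $q^7$-series $b^{-7}$, against $R$, and isolating the $L(q^2)$-dependence. I expect the $L(q^2)$ terms to combine into an exact $\Theta$-derivative of a product $C_3\cdot(\text{poly in }b,t)$. Such a derivative has coefficients divisible by $7$ on the class $m\equiv 0$ of the inner series and is otherwise shifted predictably. Lemma~\ref{lem:differential-N} is presumably what makes this exact-derivative structure appear, through $N(q^2)$. If this direct route stalls, the fallback is to check the congruence as an identity of modular forms of level $8$ (or $56$ after $7$-dissection) modulo $7$, using a Sturm bound and a finite coefficient computation.
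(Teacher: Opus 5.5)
\textbf{There is a genuine gap: Step~3 is missing its key ingredient, and the tools you propose for it cannot work.}

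Your Step~1 is correct and matches the paper's first reduction, $P_3\equiv -C_3b^{-2}(b+4t)$. Recasting the lemma as the vanishing of the coefficients of $D=P_3-(\Theta+3)^2C_3$ on the classes $m\equiv1,5,6\pmod7$ is also what the paper does. But Step~3 is the whole content of the lemma, and you give no argument there.

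The missing ingredient is
\[
N(q^2)=1-504\sum_{n\ge1}\sigma_5(n)q^{2n}\equiv1\pmod7 .
\]
\begin{itemize}
\item Inserted into Lemma~\ref{lem:differential-N}, it gives $\Theta N(q^2)\equiv0$. Hence $L(q^2)\equiv M(q^2)^2\equiv(b^2+2bt+4t^2)^2$ by \eqref{eq:M-bt}. So modulo $7$, $L(q^2)$ is not an independent quantity. Your $R(b,t,L(q^2))$ collapses to a polynomial in $b,t$, and no appeal to $\Theta L=(L^2-M)/12$ is needed.
\item Inserted into \eqref{eq:N-bt}, it gives $(b+4t)(b+t)(b+5t)\equiv1$, i.e.\ $b^3\equiv1-3b^2t-bt^2-6t^3$. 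This bounds the degree in $b$.
\end{itemize}
With these two relations, $\Theta$ acts on $C_3b^{-2}$ times polynomials in $b,t$ by the explicit rule \eqref{eq:operator-rule}. A finite computation then shows $\Theta(\Theta-2)(\Theta-3)(\Theta-4)D\equiv0$. Comparing coefficients of $q^m$ gives $m(m-2)(m-3)(m-4)\bigl(\overline p(8m+3)-(m+3)^2c(8m+3)\bigr)\equiv0$, which is the lemma.

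The tools you list for Step~3 cannot replace this.
\begin{itemize}
\item A $\Theta$-derivative is guaranteed to vanish modulo $7$ only on the class $m\equiv0$. That is not one of the classes $1,5,6$, so an ``exact derivative'' structure would prove nothing here.
\item The quintuple-product route fails as described. Take any series $A(q^7)\,q^k\,\Theta^jB$, with $B$ one of the quotients in \eqref{eq:quintuple-one}--\eqref{eq:quintuple-two} at $q$ or $q^2$. It is supported modulo $7$ in a translate of $\{0,1,5\}$. The only translate disjoint from $\{1,5,6\}$ is $\{0,2,3\}$. So the reading-off argument of Lemma~\ref{lem:sparse} can only ever certify support in $\{0,2,3\}$.
\item $D$ has a nonzero coefficient in class $4$. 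At $m=4$ one has $(m+3)^2\equiv0$ and $\overline p(35)=398640\equiv4\pmod7$. (Its coefficients at $m=0,2,3$ are also nonzero: $80\equiv3$, $6000\equiv1$, $54560\equiv2$.) So no decomposition into pieces of that kind, each certified sparse, can exist.
\item The Sturm-bound fallback is a legitimate strategy in principle, but as written it is only a pointer. You would first have to place $P_3$, $C_3$ and the $\Theta$-images of $C_3$, which have different weights, into a common space of modular forms modulo $7$, and then carry out the check. None of this is done.

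\end{itemize}
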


\begin{proof}
By \eqref{3-13}, \eqref{t-1}, and \eqref{eq:bt},
\begin{align}
\frac{J_2^{16}}{J_4^8}
=\phi(-q^2)^8
=\phi(q)^4\phi(-q)^4
=b(b+16t). \label{s-1}
\end{align}
Since $b^3=J_1^{24}/J_2^{12}$, equation \eqref{s-1} gives
\[
\begin{aligned}
\frac{J_2^{34}}{J_1^{27}J_4^8}
&=\frac{J_2^6}{J_1^3}
  \frac{J_2^{28}}{J_1^{24}J_4^8} =\frac{J_2^6}{J_1^3}
  \frac{b+16t}{b^2}.
\end{aligned}
\]
Moreover,
\[
2q\frac{J_2^{10}J_4^8}{J_1^{19}}
=
2\frac{J_2^6}{J_1^3}\frac{t}{b^2}.
\]
Therefore, by \eqref{eq:P3} and the exact identity
\(C_3=-8J_2^6/J_1^3\),
\[
\begin{aligned}
P_3
&\equiv\frac{J_2^6}{J_1^3}\frac{b+18t}{b^2} 
 \equiv-C_3\frac{b+4t}{b^2} .
\end{aligned}
\]

By \eqref{eq:M-bt}, 
\begin{align}
	M(q^2)&\equiv b^2+2bt+4t^2. \label{m-11}
\end{align}
Since \(N(q^2)\equiv1\), we also have
\(\Theta N(q^2)\equiv0\). Hence, by \eqref{eq:differential-N}
and \eqref{m-11},
\begin{align}
	L(q^2)&\equiv (b^2+2bt+4t^2)^2.
	\label{eq:L-mod7}
\end{align}
Moreover, it follows from \eqref{eq:N-bt} and $N(q^2)\equiv1 $ that 
\[
(b+4t)(b+t)(b+5t)\equiv1 ,
\]
which is equivalent to
\begin{equation}\label{eq:cubic-expanded}
	b^3\equiv1-3b^2t-bt^2-6t^3 .
\end{equation}

Equations   \eqref{m-9}   
and \eqref{m-10} give the differential rules
\begin{align}
	\Theta b&\equiv5b\bigl(L(q^2)-b-4t\bigr),\label{eq:db}\\
	\Theta t&\equiv t\bigl(5L(q^2)+3b+3t\bigr).\label{eq:dt}
\end{align}

Since \(C_3=-8J_2^6/J_1^3\),
logarithmic differentiation gives
\[
\frac{\Theta C_3}{C_3}
=
6\Theta\log J_2-3\Theta\log J_1.
\]
Using \eqref{eq:log-J},  
we obtain
\begin{align*}
	\frac{\Theta C_3}{C_3}
	&=
	\frac12\bigl(L(q^2)-1\bigr)
	-\frac18\bigl(L(q)-1\bigr)\\
	&=
	\frac{1}{2}L(q^2)-\frac{1}{8}L(q)-\frac{3}{8}
	.
\end{align*}
Multiplying both sides by \(8C_3\) and reducing modulo \(7\), we obtain
\begin{align*}
 \Theta C_3 
	& \equiv 
	C_3(4L(q^2)- L(q)-3)
	  . 
\end{align*}
Substituting \eqref{eq:L-bt} into the above congruence 
yields  
\begin{align}
	\Theta C_3
	\equiv
	C_3\bigl(2L(q^2)+b+4t-3\bigr)
 .
	\label{eq:dC3}
\end{align}

For an integer $r$ and a polynomial $U(b,t)$, the product rule and
\eqref{eq:db}--\eqref{eq:dC3} give
\begin{equation}\label{eq:operator-rule}
 (\Theta-r)(C_3b^{-2}U)
 \equiv C_3b^{-2}
 \left(\Theta U+\bigl(6L(q^2)+4b+2t-3-r\bigr)U\right).
\end{equation}
Define 
\[
 Y:=P_3-(\Theta+3)^2C_3 
\]
and  \[H:=2L(q^2)+b+4t. \]
 From \eqref{eq:dC3},
\[
 (\Theta+3)C_3\equiv C_3H,
 \qquad
 (\Theta+3)^2C_3\equiv C_3(\Theta H+H^2).
\]
Together with the preceding expression for \(P_3\), this gives
\begin{equation}\label{eq:Y-first-reduction}
 Y\equiv-C_3b^{-2}
 \bigl(b+4t+b^2(\Theta H+H^2)\bigr)\pmod7.
\end{equation}
We now spell out the polynomial reduction used below.  By
\eqref{eq:L-mod7} and \eqref{eq:cubic-expanded},
\begin{align*}
L(q^2)&\equiv b^2t^2+b+2bt^3+t+3t^4,\\
H&\equiv2b^2t^2+3b+4bt^3+6t+6t^4.
\end{align*}
Using \eqref{eq:db} and \eqref{eq:dt} to differentiate the latter 
polynomial, and reducing every occurrence of \(b^3\) by
\eqref{eq:cubic-expanded}, we obtain 
\begin{align*}
\Theta H+H^2\equiv{}&
 2b^2+6b^2t^3+5b^2t^6+4bt+3bt^4+3bt^7 +3t^2+2t^5+6t^8.
\end{align*}
Substitution into \eqref{eq:Y-first-reduction}, followed by one more
use of \eqref{eq:cubic-expanded}, gives
\[
 b+4t+b^2(\Theta H+H^2)\equiv U_0,
\]
where we define
\begin{align*}
 U_0:={}&b^2(6t^5+2t^8)
 +b(3+3t^3+5t^6+3t^9)
 +2t+4t^4+t^7+2t^{10},\\
 U_1:={}&2b^2+bt+2t^2,\\
 U_2:={}&b^2(1+5t+t^4)
 +b(4t+2t^2+6t^5)
 +6+t^2+3t^3+3t^6,\\
 U_3:={}&b^2(2+2t+6t^4+2t^5+2t^8) +b(4+t+5t^2+4t^3+t^5+2t^6+6t^9)
 \\
 & +1+5t+2t^2+4t^3+4t^6+5t^7.
\end{align*}
Thus \(Y\equiv-C_3b^{-2}U_0\).  Continuing with
\eqref{eq:operator-rule}  and reducing by
\eqref{eq:L-mod7} and \eqref{eq:cubic-expanded} after each step,
 we obtain 
\begin{align}
 Y&\equiv-C_3b^{-2}U_0,  \label{m-15}\\
 (\Theta-4)Y&\equiv-C_3b^{-2}U_1,  \nonumber \\
 \Theta(\Theta-4)Y&\equiv-C_3b^{-2}U_2,  \nonumber \\
 (\Theta-2)\Theta(\Theta-4)Y&\equiv-C_3b^{-2}U_3, \nonumber \\
 (\Theta-3)(\Theta-2)\Theta(\Theta-4)Y&\equiv0. \label{m-16}
\end{align}
Since polynomials in $\Theta$ commute, congruences
\eqref{m-15} and \eqref{m-16} give
\begin{equation}\label{eq:annihilator}
	\Theta(\Theta-2)(\Theta-3)(\Theta-4)
	\bigl(P_3-(\Theta+3)^2C_3\bigr)\equiv0.
\end{equation}

The coefficient of $q^m$ in $Y$ is
$
 \overline p(8m+3)-(m+3)^2c(8m+3).
$
It follows from \eqref{eq:annihilator} that for $m\geq 0$,
\[
m(m-2)(m-3)(m-4)( \overline p(8m+3)-(m+3)^2c(8m+3) )\equiv 0  . 
\]
Replacing $m$ by $7m+r$ with $r=1,5,6$  in  the above congruence   proves
\eqref{eq:p-c-relation}.
\end{proof}

\begin{lemma}\label{lem:X7P7}
	If \(m\) is a positive integer and \(7\nmid m\), then
\begin{equation}\label{eq:x-p-relation}
 x(8m+7)\equiv3m^4\overline p(8m+7) .
\end{equation}
\end{lemma}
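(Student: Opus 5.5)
The plan is to repeat the strategy of Lemma~\ref{lem:P3C3}, this time with $P_7$ as the base series in place of $C_3$.

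\textbf{Step 1: write $X_7$ as $P_7$ times a polynomial in $b,t$.} By \eqref{eq:P7} and \eqref{eq:X7}, the two terms of $X_7$ compare with $P_7$ as follows. For the first term,
\[
\frac{J_2^{42}}{J_1^{31}J_4^8}\Big/\frac{J_2^{22}}{J_1^{23}}
=\frac{J_2^{16}}{J_4^8}\cdot\frac{J_2^4}{J_1^8}
=\frac{b(b+16t)}{b}=b+16t,
\]
using \eqref{s-1}. For the second term,
\[
q\frac{J_2^{18}J_4^8}{J_1^{23}}\Big/\frac{J_2^{22}}{J_1^{23}}=q\frac{J_4^8}{J_2^4}=t .
\]
Hence
\[
X_7\equiv P_7\bigl(4(b+16t)+t\bigr)\equiv P_7(4b+2t).
\]

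\textbf{Step 2: reduce the claim to an operator annihilation.} Set $Z:=X_7-3\Theta^4P_7$. The coefficient of $q^m$ in $Z$ is
\[
z(m)=x(8m+7)-3m^4\overline p(8m+7).
\]
Since $m^6\equiv1$ whenever $7\nmid m$, it suffices to prove
\[
\Theta^6 Z\equiv0 .
\]
Because $m^{10}\equiv m^4\pmod 7$ for every integer $m$, we have $\Theta^{10}P_7\equiv\Theta^4P_7$. So the target is equivalent to
\[
\Theta^6\bigl(P_7(4b+2t)\bigr)\equiv3\,\Theta^4P_7 .
\]
If this exact form turns out awkward, any annihilator of the shape $\Theta^k\prod_j(\Theta-r_j)$ with all $r_j\equiv0$ would serve equally well.

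\textbf{Step 3: compute $\Theta P_7$ in terms of $b,t,L(q^2)$.} Apply logarithmic differentiation via \eqref{eq:log-J}:
\[
\frac{\Theta P_7}{P_7}=\frac{22\cdot2}{24}\bigl(L(q^2)-1\bigr)-\frac{23}{24}\bigl(L(q)-1\bigr).
\]
Reduce this modulo $7$, using that $24$ is invertible there. Then eliminate $L(q)$ by means of $L(q)=2L(q^2)-b-32t$ from \eqref{eq:L-bt}. The result has the form
\[
\Theta P_7\equiv P_7\bigl(\alpha L(q^2)+\beta b+\gamma t+\delta\bigr)
\]
for explicit residues $\alpha,\beta,\gamma,\delta$. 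This yields an operator rule analogous to \eqref{eq:operator-rule}:
\[
(\Theta-r)(P_7U)\equiv P_7\bigl(\Theta U+(\alpha L(q^2)+\beta b+\gamma t+\delta-r)U\bigr),
\]
valid for any polynomial $U(b,t)$.

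\textbf{Step 4: iterate and reduce.} Starting from $U=4b+2t$ for $X_7$ and $U=1$ for $P_7$, apply the rule repeatedly. At each stage:
\begin{itemize}
\item differentiate $b$ and $t$ using \eqref{eq:db} and \eqref{eq:dt};
\item replace $L(q^2)$ by the polynomial given in \eqref{eq:L-mod7};
\item reduce $b^3$ by \eqref{eq:cubic-expanded}, so every expression stays of degree at most $2$ in $b$.
\end{itemize}
After six applications (respectively four) one compares the resulting polynomials $U$, checking that the difference vanishes identically modulo $7$.

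\textbf{Main obstacle.} The difficulty is entirely the bookkeeping in Step 4: the degrees in $t$ grow with each differentiation, just as in the $U_0,\dots,U_3$ of Lemma~\ref{lem:P3C3}. Having only degree $\le2$ in $b$, with $t$ unrestricted, is not obviously a canonical form, so a mismatch might only be an artifact of reduction. If direct comparison fails for that reason, I would first try intermediate factors $(\Theta-r)$ chosen to cancel the leading terms, as was done in Lemma~\ref{lem:P3C3}, and only afterwards check that the resulting annihilator still forces $z(m)\equiv0$ for $7\nmid m$.
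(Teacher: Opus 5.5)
Your proposal is correct and is essentially the paper's argument: the same reduction $X_7\equiv P_7(4b+2t)$, the same derivative rule $\Theta P_7\equiv P_7\bigl(4L(q^2)+3b+5t\bigr)$, and the same polynomial reduction via \eqref{eq:L-mod7}--\eqref{eq:dt}. The paper obtains that rule through $P_7\equiv (J_{14}^3/J_7^4)J_1^5J_2$, which is equivalent to your direct logarithmic derivative, and it checks the cheaper target $\Theta X_7\equiv 3\Theta^5P_7$ (via $V_5\equiv b^2t^3+bt^4+2bt+2t^5$); your $\Theta^6X_7\equiv 3\Theta^4P_7$ is that identity with $\Theta^5$ applied, so your ten-step comparison will succeed where the paper needs six steps, and your canonical-form worry does not matter because only sufficiency of the polynomial match is used.
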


\begin{proof}
Applying \eqref{eq:freshman} to \eqref{eq:P7} gives
\begin{equation}\label{eq:P7-factor}
 P_7\equiv\frac{J_{14}^3}{J_7^4}J_1^5J_2.
\end{equation}
Moreover, \eqref{eq:X7}, \eqref{eq:P7}, and
\[
 \frac{J_2^{20}}{J_1^8J_4^8}=b+16t
\]
give
\begin{equation}\label{eq:X7-factor}
\begin{aligned}
 X_7
 &\equiv P_7\bigl(4(b+16t)+t\bigr)\\
 &\equiv P_7(4b+2t) .
\end{aligned}
\end{equation}
By  \eqref{eq:L-bt} and \eqref{eq:log-J},
\begin{equation}\label{eq:log-J15J2}
\begin{aligned}
 \Theta\log(J_1^5J_2)
 &=\frac{5L(q)+2L(q^2)-7}{24}\\
 &\equiv4L(q)+3L(q^2)\\
 &\equiv4L(q^2)+3b+5t .
\end{aligned}
\end{equation}
In view of \eqref{eq:log-J15J2}, set $V_0:=1$ and recursively define
\begin{equation}\label{eq:V-recursion}
 V_{j+1}:=\Theta V_j+\bigl(4L(q^2)+3b+5t\bigr)V_j.
\end{equation}
Then for $j\geq 0$, 
\begin{align*}
	\Theta^{j}  (J_1^5J_2) \equiv J_1^5J_2V_j.
\end{align*}
  Five applications of
\eqref{eq:V-recursion}, using 
\eqref{eq:L-mod7}--\eqref{eq:dt}, give
\begin{equation}\label{eq:V5}
 V_5\equiv b^2t^3+bt^4+2bt+2t^5.
\end{equation}
The same rules give
\[
 \frac{\Theta\bigl(J_1^5J_2(4b+2t)\bigr)}{J_1^5J_2}
 \equiv3b^2t^3+3bt^4+6bt+6t^5
 \equiv3V_5
\]
by \eqref{eq:V5}.
Therefore
\begin{equation}\label{eq:core-X7}
 \Theta\bigl(J_1^5J_2(4b+2t)\bigr)
 \equiv3\Theta^5(J_1^5J_2).
\end{equation}
Since $J_{14}^3/J_7^4$ contains only terms of the form \(q^{7n}\),
\begin{align}\label{m-20}
\Theta(J_{14}^3/J_7^4) \equiv 0  .
\end{align}
In view of \eqref{eq:P7-factor},
\eqref{eq:X7-factor}, \eqref{eq:core-X7}, and \eqref{m-20},
\begin{align}
\Theta(X_7)&\equiv \Theta( J_{14}^3/J_7^4\cdot  J_1^5J_2 (4b+2t))\nonumber\\
& \equiv
  J_1^5J_2 (4b+2t)\Theta( J_{14}^3/J_7^4)+  \frac{J_{14}^3}{J_7^4}
\Theta(  J_1^5J_2 (4b+2t)) \nonumber\\
& \equiv3\frac{J_{14}^3}{J_7^4}\Theta^5(J_1^5J_2) .\label{m-22}
\end{align}
On the other hand,
\begin{align}
\Theta^5(P_7) =\Theta^5( J_{14}^3/J_7^4\cdot J_1^5J_2) 
\equiv \frac{J_{14}^3}{J_7^4}\Theta^5(J_1^5J_2)  . \label{m-23}
\end{align}
In light of  \eqref{m-22} and \eqref{m-23},
\begin{align}
\label{eq:X7P7-differential}
\Theta X_7\equiv3\Theta^5P_7 .
\end{align}
Equation \eqref{eq:X7P7-differential} is the required differential
relation.  Comparing coefficients of $q^m$, we get
  \[
  mx(8m+7)\equiv 3m^5\overline{p}(8m+7) .
  \]
Dividing by $m$ when $7\nmid m$ gives \eqref{eq:x-p-relation}.
\end{proof}

\section{Proof of Conjecture \ref{conj-1}}

\begin{proof}
We prove the five congruences in their stated order.  First, apply
Lemma~\ref{lem:P3C3} with $m=7n+5$.  Since
\((m+3)^2\equiv1\pmod7\), we obtain
\[
 \overline p(56n+43)\equiv c(56n+43) .
\]
Substitution into \eqref{eq:OSOME} proves \eqref{c-1}.

Next, Lemmas~\ref{lem:sparse} and \ref{lem:P3C3}, with $m=7n+1$,
give
\[
 x(56n+11)\equiv0,
 \qquad
 \overline p(56n+11)\equiv2c(56n+11) .
\]
The right side of \eqref{eq:OSOMEo} therefore vanishes modulo $7$,
which proves \eqref{c-2}.

It follows from  \eqref{eq:c-eight-seven} that 
 \begin{align}
c(56n+23)=c(56n+47)=0.
\label{g-0}
\end{align}
Replacing $m$ by $7n+2$ and $7n+5$ 
 in Lemma~\ref{lem:X7P7} gives
 \begin{align}\label{g-1}
 x(56n+23)\equiv 6 \overline p(  56n+23 ) , 
 \end{align}
 and 
   \begin{align}
  x(56n+47)
\equiv 6 \overline p(56n+47).\label{g-2}
 \end{align}
Equation \eqref{eq:OSOMEo}, together with \eqref{g-0}--\eqref{g-2}, proves \eqref{c-3} and \eqref{c-4}.

Finally, Lemmas~\ref{lem:sparse} and \ref{lem:P3C3}, with $m=7n+6$,
give
\[
 x(56n+51)\equiv0,
 \qquad
 \overline p(56n+51)\equiv4c(56n+51).
\]
Replacing $n$ by $56n+51$ in  \eqref{eq:OSOMEe}
 and using the above congruences,  we get   \eqref{c-5}.
\end{proof}

\end{document}